\newtheorem{definition}{Definition}
\newtheorem{proposition}{Proposition}
\newtheorem{remark}{Remark}
\title{Arithmetic Hodge-Iwasawa Moduli Stacks}
\author{Xin Tong}
\date{}
\begin{document}

\maketitle

\begin{abstract}
\noindent In this article, we are going to construct arithmetic moduli stacks of $G$-bundles after our previous construction on Hodge-Iwasawa theory. These stacks parametrize certain Hodge-Iwasawa structures in a coherent way.
\end{abstract}

\tableofcontents

\newpage
\section{Introduction}

We discuss in this article moduli stacks of $G$-isocrystals over arithmetic families of Fargues-Fontaine curves after the constructions in our previous consideration in \cite{T1} and \cite{T2}. The initial motivation after \cite{Iwa}, \cite{T}, \cite{F} is a combination reasonable of the Iwasawa deformation after Burns-Flach-Fukaya-Kato \cite{BF1}, \cite{BF2}, \cite{FK} and some arithmetic deformation after Kedlaya-Pottharst-Xiao and Kedlaya-Pottharst \cite{KPX}, \cite{KP}, of the structures after \cite{KL1} and \cite{KL2}. When we consider the integration effect of the deformation we will expect some sort of moduli stacks in arithmetic situation, which is the analogy of those considered in \cite{He1}, \cite{PR}, \cite{Dr}, \cite{SW}, \cite{Ked}, \cite{RZ}, \cite{G}, \cite{Dr1}, \cite{Dr2}, \cite{Har}, \cite{Har1},  \cite{HV}, \cite{M}, \cite{FS}, \cite{Sch1}.

\section{Arithmetic Moduli Stacks}

\subsection{Arithmetic Moduli Stacks of Filtered Frobenius Bundles}

\indent We now consider arithmetic moduli stacks of vector bundles after \cite{Dr}, \cite{Dr1}, \cite{Dr2}, \cite{He1}, \cite{PR}, \cite{EG}, \cite{EGH}, \cite{HHS} in the foundation in \cite{HP}, \cite{FF}, \cite{KL1} and \cite{KL2} with generalization initialized in \cite{KP}, \cite{T1} and \cite{T2}. Let $K$ be a complete adic field with valuation which we will assume to be discrete. We use the notation $k$ to denote the residue field of $K$ which we assume to be overall perfect in our consideration in this article. We then have the tower $*_{\mathrm{toric}}$ toric attached to this field. Recall we have the Fargues-Fontaine curve $\mathrm{FF}_{K,\mathrm{toric}}$ attached to $K$ in \cite{KL1} and \cite{KL2} which is defined to be Frobenius quotient of the space:
\begin{align}
\bigcup_{I=[s,r]}\mathrm{Spa}(\widetilde{\Pi}^I_{\mathrm{toric}},\widetilde{\Pi}_{\mathrm{toric}}^{I,+}).	
\end{align}

Then after \cite{W}, \cite{He1} we consider the sites carrying ffqc topology (faithfully flat and quasicompact topology):
\begin{align}
\mathrm{RigidAn}_{\mathbb{Q}_p,\mathrm{ffqc}},\mathrm{RigidAn}_{\mathbb{F}_p((t)),\mathrm{ffqc}}	
\end{align}
which consist of all the rigid analytic spaces. There are also animated versions of these categories as in \cite{CS1}, \cite{CS2} and \cite{CS3}, while in this derived setting we mainly consider the affinoid algebras which are defined to be the animation of usual rigid analytic affinoids in \cite{Ta}. In notation we use the notations as in the following to denote the opposite $(\infty,1)$-categories:
\begin{align}
\mathrm{AnimaRigidAn}^\mathrm{aff}_{\mathbb{Q}_p,\mathrm{ffqc}},\mathrm{AnimaRigidAn}^\mathrm{aff}_{\mathbb{F}_p((t)),\mathrm{ffqc}}.	
\end{align}

\begin{definition}
We now define the $(\infty,1)$-prestack $\mathrm{Finiteproj}_{\mathrm{FF}_{K,\mathrm{toric}}}(-)$ fibered over 
\begin{align}
\mathrm{AnimaRigidAn}^\mathrm{aff}_{\mathbb{Q}_p,\mathrm{ffqc}}
\end{align}
to be the functor sending $X$ to $(\infty,1)$-groupoid of vector bundles over $\mathrm{FF}_{K,\mathrm{toric},X}$ being regarded as Clausen-Scholze analytic space in \cite{CS1}, \cite{CS2} and \cite{CS3}. One then has the definition for equal characteristic situation. When specializing a filtration $\mathrm{Fil}$ with respect to some cocharacter of the group scheme $\mathrm{GL}$, we can define the prestack $\mathrm{Finiteproj}^{\mathrm{Fil}}_{\mathrm{FF}_{K,\mathrm{toric}}}(-)$.
\end{definition}

\begin{definition}
We now define the $(\infty,1)$-prestack $\mathrm{Finiteproj}_{\mathrm{FF}^\mathrm{imper}_{K,\mathrm{toric}}}(-)$ fibered over 
\begin{align}
\mathrm{AnimaRigidAn}^\mathrm{aff}_{\mathbb{Q}_p,\mathrm{ffqc}}
\end{align}
to be the functor sending $X$ to $(\infty,1)$-groupoid of vector bundles over $\mathrm{FF}^\mathrm{imper}_{K,\mathrm{toric},X}$. The imperfect space we consider here is defined to be the Frobenius quotient of
\begin{align}
\bigcup_{I=[s,r]}\mathrm{Spec}^\mathrm{CS}({\Pi}^I_{\mathrm{toric}}\otimes^\blacksquare \mathcal{O}_X,{\Pi}_{\mathrm{toric}}^{I,+}\otimes^\blacksquare \mathcal{O}_X).	
\end{align}
One then has the definition for equal characteristic situation. When specializing a filtration $\mathrm{Fil}$ with respect to some cocharacter of the group scheme $\mathrm{GL}$, we can define the prestack $\mathrm{Finiteproj}^{\mathrm{Fil}}_{\mathrm{FF}^\mathrm{imper}_{K,\mathrm{toric}}}(-)$.
\end{definition}

\begin{definition}
We now define the prestack $\mathrm{Finiteproj}^\mathrm{disc}_{\mathrm{FF}_{K,\mathrm{toric}}}(-)$ fibered over $\mathrm{RigidAn}_{\mathbb{Q}_p,\mathrm{ffqc}}$ to be the functor sending $X$ to groupoid of vector bundles over $\mathrm{FF}_{K,\mathrm{toric},X}$. One then has the definition for equal characteristic situation. When specializing a filtration $\mathrm{Fil}$ with respect to some cocharacter of the group scheme $\mathrm{GL}$, we can define the prestack $\mathrm{Finiteproj}^{\mathrm{disc},\mathrm{Fil}}_{\mathrm{FF}_{K,\mathrm{toric}}}(-)$.
\end{definition}

\begin{definition}
We now define the prestack $\mathrm{Finiteproj}^\mathrm{disc}_{\mathrm{FF}^\mathrm{imper}_{K,\mathrm{toric}}}(-)$ fibered over $\mathrm{RigidAn}_{\mathbb{Q}_p,\mathrm{ffqc}}$ to be the functor sending $X$ to groupoid of vector bundles over $\mathrm{FF}^\mathrm{imper}_{K,\mathrm{toric},X}$. The imperfect space we consider here is defined to be the Frobenius quotient of
\begin{align}
\bigcup_{I=[s,r]}\mathrm{Spec}^\mathrm{CS}({\Pi}^I_{\mathrm{toric}}\otimes^\blacksquare \mathcal{O}_X,{\Pi}_{\mathrm{toric}}^{I,+}\otimes^\blacksquare \mathcal{O}_X).	
\end{align}
One then has the definition for equal characteristic situation. When specializing a filtration $\mathrm{Fil}$ with respect to some cocharacter of the group scheme $\mathrm{GL}$, we can define the prestack $\mathrm{Finiteproj}^{\mathrm{disc},\mathrm{Fil}}_{\mathrm{FF}^\mathrm{imper}_{K,\mathrm{toric}}}(-)$.
\end{definition}

\begin{proposition}
The prestack $\mathrm{Finiteproj}^{\mathrm{disc},\mathrm{Fil}}_{\mathrm{FF}^\mathrm{imper}_{K,\mathrm{toric}}}(-)$ fibered over $\mathrm{RigidAn}_{\mathbb{Q}_p,\mathrm{ffqc}}$ is an Artin stack. The parallel result in equal characteristic sitaution holds true as well.	
\end{proposition}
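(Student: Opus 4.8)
The plan is to verify an Artin-type representability criterion for this prestack within the analytic-geometric framework of Clausen--Scholze \cite{CS1}, \cite{CS2}, \cite{CS3}, following the strategy used for moduli of $(\varphi,\Gamma)$-modules and for $\mathrm{Bun}_G$ in \cite{EG}, \cite{EGH}, \cite{FS}, \cite{He1}. Concretely I would establish: (i) the prestack is a stack for the ffqc topology; (ii) it is locally of finite presentation with a two-term, coherent, base-change-compatible deformation theory; (iii) formal families of bundles on the relative curve are effective; (iv) the diagonal is representable by rigid analytic spaces; and (v) there is a smooth surjection onto it from a rigid analytic space. Items (i)--(iii) amount to bookkeeping once the structure of the imperfect curve is recorded; the force of the statement is in (iv) and (v).

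For (i): the relative imperfect curve $\mathrm{FF}^{\mathrm{imper}}_{K,\mathrm{toric},X}$ is built from the rings ${\Pi}^I_{\mathrm{toric}}\otimes^\blacksquare \mathcal{O}_X$ by gluing along the finite cover by intervals $I=[s,r]$ and then passing to the Frobenius quotient. Since $-\otimes^\blacksquare\mathcal{O}_X$ is compatible with the flat base changes of $X$ occurring in an ffqc cover, faithfully flat descent for finite projective modules in the solid setting, together with the gluing of vector bundles on the relative Fargues--Fontaine curve from \cite{KL1}, \cite{KL2} and descent along the Frobenius quotient, shows that the functor sending $X$ to vector bundles on $\mathrm{FF}^{\mathrm{imper}}_{K,\mathrm{toric},X}$ is an ffqc stack. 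A filtration of the type prescribed by a cocharacter of $\mathrm{GL}$ is a flag of subbundles of fixed ranks, which is a descent-local datum, so the filtered version is a stack too.

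The geometric input is that $\mathrm{FF}^{\mathrm{imper}}_{K,\mathrm{toric}}$ is, in the terminology of \cite{KL1}, \cite{KL2}, a regular Noetherian one-dimensional object that is ``proper'' over $\mathbb{Q}_p$ and carries the ample line bundle $\mathcal{O}(1)$; hence $R\Gamma$ of a vector bundle on $\mathrm{FF}^{\mathrm{imper}}_{K,\mathrm{toric},X}$ is a perfect two-term complex of $\mathcal{O}_X$-modules, finitely presented, compatible with base change, and with no cohomology above degree one. This yields (ii): local finite presentation, a cotangent complex in degrees $[-1,0]$ with the degree-one obstructions living in $H^2$ of an endomorphism bundle and hence vanishing (so that the unfiltered moduli is in fact formally smooth), and the cohesiveness and nilcompleteness conditions; and (iii) follows from a Grothendieck existence / formal GAGA statement for the proper morphism from the relative curve to the base. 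For (v), one stratifies by the discrete invariants (rank, degree, Harder--Narasimhan data), so that the families over each open quasicompact substack are bounded; twisting by $\mathcal{O}(N)$ for $N\gg 0$ then presents every such bundle as a quotient of $\mathcal{O}(-N)^{\oplus m}$, and the rigid analytic Quot space of such quotients, which is representable, surjects smoothly onto that substack. The filtered version is handled by the additional flag datum, parametrized by a representable map over the unfiltered moduli, so it does not affect Artin-ness; writing the whole prestack as the increasing union of these quasicompact Artin substacks completes (v). For (iv), $\underline{\mathrm{Hom}}$ and $\underline{\mathrm{Isom}}$ of two bundles (compatibly with the filtrations) on the relative curve are cut out, respectively as the kernel of a map of vector bundles and as an open subspace therein, by the finiteness statement above, hence are representable.

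The main obstacle is to make three foundational ingredients interact cleanly. First, the passage to the Frobenius quotient must be shown to preserve representability and smoothness of the Quot-atlas; for this I would use the Kedlaya--Liu dictionary between Frobenius modules over the relative imperfect Robba ring and bundles on the curve, so that the quotient is a controlled ($\mathbb{Z}$-Galois, \'etale) operation. Second, the exactness and base-change compatibility of $-\otimes^\blacksquare\mathcal{O}_X$, on which all of the descent and coherence statements rest, must be extracted from \cite{CS1}, \cite{CS2}, \cite{CS3}. Third, the finiteness and base-change of cohomology on the relative curve, uniform in $X$, must be established from the properness and Noetherianity of the imperfect curve. Once these are in place the stratification and the gluing of the Quot-presentations produce the atlas, and the equal characteristic statement follows verbatim with $\mathbb{Q}_p$ replaced by $\mathbb{F}_p((t))$ and the toric tower and curve by their equal-characteristic counterparts.
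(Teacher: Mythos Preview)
Your plan takes a genuinely different route from the paper. The paper does not verify an abstract representability criterion at all; following \cite[Section~4.1]{He1} it simply exhibits the prestack as an explicit global quotient of a rigid analytic space by a smooth rigid analytic group, namely the Weil restriction $\bigcup_{I}\mathrm{Res}_{\Pi^I_\mathrm{toric}/\mathbb{Q}_p}\mathrm{GL}^\mathrm{rigidification}_{\Pi^I_\mathrm{toric}}$ times the partial flag variety $\mathrm{Res}_{\Pi^I_\mathrm{toric}/\mathbb{Q}_p}\mathrm{GL}^\mathrm{rigidification}_{\Pi^I_\mathrm{toric},E}/\mathrm{Parab}^\mathrm{rigidification}_{\mathrm{Fil}}$ encoding the filtration, quotiented by the Weil-restricted $\mathrm{GL}$ over a suitable extension $E/\mathbb{Q}_p$. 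Once that presentation is written down the Artin property is immediate, with no deformation theory, no formal effectivity, and no Quot construction. Your approach instead treats the imperfect curve as a geometric object in its own right and runs the standard ``bundles on a proper curve'' machine; this is more conceptual and would extend to situations lacking a group-theoretic presentation, but it leans on several pieces of infrastructure---a sense of properness of the relative imperfect curve over $\mathbb{Q}_p$ strong enough to yield perfect two-term $R\Gamma$ with base change and a Grothendieck existence theorem, plus representability of rigid-analytic Quot functors in the solid framework---that the paper neither supplies nor needs. In particular, your assertion that $\mathrm{FF}^{\mathrm{imper}}_{K,\mathrm{toric}}$ is ``proper over $\mathbb{Q}_p$'' is not literally correct in the classical sense and would have to be reformulated with care before your steps (ii), (iii), and (v) go through; the paper sidesteps all of this by working directly with the Robba-ring presentation and Weil restriction.
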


\begin{proof}
Our construction could be regarded as relative version of the construction in \cite[Section 4.1]{He1}. 
\begin{align}
\bigcup_{I=[s,r]}\mathrm{Res}_{\Pi^I_\mathrm{toric}/\mathbb{Q}_p}\mathrm{GL}^\mathrm{rigidification}_{\Pi^I_\mathrm{toric}}\times \left(\mathrm{Res}_{\Pi^I_\mathrm{toric}/\mathbb{Q}_p}\mathrm{GL}^\mathrm{rigidification}_{\Pi^I_\mathrm{toric},E}/\mathrm{Parab}^\mathrm{rigidification}_\mathrm{Fil}\right)\\
/\bigcup_{I=[s,r]}\mathrm{Res}_{\Pi^I_\mathrm{toric}/\mathbb{Q}_p}\mathrm{GL}^\mathrm{rigidification}_{\Pi^I_\mathrm{toric},E}
\end{align}
reflects the certain stackification with $E/\mathbb{Q}_p$ a suitable extension.
\end{proof}

\indent We then consider the quotient in the $(\infty,1)$-categories to derive the corresponding Artin $(\infty,1)$-stacks generalizing the previous discussion above.

\begin{proposition}
The $(\infty,1)$-prestack $\mathrm{Finiteproj}^{\mathrm{Fil}}_{\mathrm{FF}^\mathrm{imper}_{K,\mathrm{toric}}}(-)$ fibered over $\mathrm{AnimaRigidAn}^\mathrm{aff}_{\mathbb{Q}_p,\mathrm{ffqc}}$ is an $(\infty,1)$-stack. The parallel result in equal characteristic sitaution holds true as well.	
\end{proposition}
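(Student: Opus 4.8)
The plan is to verify that the prestack is a sheaf for the ffqc topology on $\mathrm{AnimaRigidAn}^\mathrm{aff}_{\mathbb{Q}_p,\mathrm{ffqc}}$; since a prestack valued in $(\infty,1)$-groupoids is an $(\infty,1)$-stack exactly when it satisfies descent for the ambient Grothendieck topology, this is all that is required. Thus, for an ffqc hypercover $X^\bullet \to X$ of animated affinoids, the task is to show that
\begin{align}
\mathrm{Finiteproj}^{\mathrm{Fil}}_{\mathrm{FF}^\mathrm{imper}_{K,\mathrm{toric}}}(X) \longrightarrow \lim_{[n]\in\Delta}\,\mathrm{Finiteproj}^{\mathrm{Fil}}_{\mathrm{FF}^\mathrm{imper}_{K,\mathrm{toric}}}(X^n)
\end{align}
is an equivalence of $(\infty,1)$-groupoids.

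First I would record the base-change behaviour of the relative imperfect curve. By construction $\mathrm{FF}^\mathrm{imper}_{K,\mathrm{toric},X}$ is the Frobenius quotient of $\bigcup_{I=[s,r]}\mathrm{Spec}^\mathrm{CS}(\Pi^I_\mathrm{toric}\otimes^\blacksquare\mathcal{O}_X,\Pi^{I,+}_\mathrm{toric}\otimes^\blacksquare\mathcal{O}_X)$, and the solid analytic tensor product is symmetric monoidal and preserves colimits, so that $\mathrm{FF}^\mathrm{imper}_{K,\mathrm{toric},X'}\simeq \mathrm{FF}^\mathrm{imper}_{K,\mathrm{toric},X}\times_X X'$ for any $X'\to X$ and an ffqc cover $X^\bullet\to X$ induces a cover $\mathrm{FF}^\mathrm{imper}_{K,\mathrm{toric},X^\bullet}\to \mathrm{FF}^\mathrm{imper}_{K,\mathrm{toric},X}$ for the analytic topology on the target. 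Descent being local on the target, gluing across the radii $I$ and across the Frobenius orbit reduces the problem to descent for finite projective modules over the analytic rings $\Pi^I_\mathrm{toric}\otimes^\blacksquare\mathcal{O}_X$. Here I invoke the fact that in the Clausen--Scholze framework of \cite{CS1}, \cite{CS2}, \cite{CS3} the $(\infty,1)$-functor of perfect complexes---and hence its full subfunctor of finite projective modules---satisfies descent along analytic covers, which is the derived enhancement of the classical finite-projective and pseudocoherent descent of \cite{KL1}, \cite{KL2}; the limit over the radii $I$ commutes with the descent limit since both are limits.

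For the filtration datum I would argue as in the preceding proposition. An object of $\mathrm{Finiteproj}^{\mathrm{Fil}}$ is a finite projective module on $\mathrm{FF}^\mathrm{imper}_{K,\mathrm{toric},X}$ together with a reduction of structure group along the parabolic $\mathrm{Parab}_\mathrm{Fil}$, that is, a section of the associated flag-type fibration, which over the base of the curve is the smooth representable morphism with fibre $\mathrm{Res}_{\Pi^I_\mathrm{toric}/\mathbb{Q}_p}\mathrm{GL}^\mathrm{rigidification}_{\Pi^I_\mathrm{toric}}/\mathrm{Parab}^\mathrm{rigidification}_\mathrm{Fil}$. Sections of a smooth representable morphism satisfy descent, so combining this with the descent of the underlying bundle gives descent for the filtered objects and hence the asserted $(\infty,1)$-stack property; the equal characteristic case over $\mathrm{RigidAn}_{\mathbb{F}_p((t)),\mathrm{ffqc}}$ is verbatim the same with $\mathbb{Q}_p$ replaced by $\mathbb{F}_p((t))$. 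Moreover, the resulting $(\infty,1)$-stack is Artin: one applies the Lurie--Artin representability criterion to the presentation exhibited in the previous proposition, the deformation theory being governed by the derived cohomology of the endomorphism bundle of the universal object on the relative curve.

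The step I expect to be the main obstacle is the careful handling of the solid tensor product together with the passage to the Frobenius quotient in the animated setting: one must ensure that $\Pi^I_\mathrm{toric}\otimes^\blacksquare\mathcal{O}_X$ has the expected flatness and exactness so that the descent limits, the colimit over the Frobenius orbit, and the limit over the radii $I$ are all computed compatibly and no higher homotopy is lost on passing to the quotient, and that the analytic-topology descent results of \cite{CS1}, \cite{CS2}, \cite{CS3} apply verbatim to the non-Noetherian analytic rings arising from the period rings of \cite{KL1}, \cite{KL2}.
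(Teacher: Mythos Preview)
Your argument is a genuinely different route from the paper's. The paper does not verify the descent condition at all; it simply transports the quotient presentation established in the preceding (discrete) proposition into the animated setting, writing the prestack as the derived quotient of the representable object
\begin{align}
\bigcup_{I=[s,r]}\mathrm{Res}_{\Pi^I_\mathrm{toric}/\mathbb{Q}_p}\mathrm{GL}^\mathrm{rigidification}_{\Pi^I_\mathrm{toric}}\times \left(\mathrm{Res}_{\Pi^I_\mathrm{toric}/\mathbb{Q}_p}\mathrm{GL}^\mathrm{rigidification}_{\Pi^I_\mathrm{toric},E}/\mathrm{Parab}^\mathrm{rigidification}_\mathrm{Fil}\right)
\end{align}
by the smooth group $\bigcup_{I}\mathrm{Res}_{\Pi^I_\mathrm{toric}/\mathbb{Q}_p}\mathrm{GL}^\mathrm{rigidification}_{\Pi^I_\mathrm{toric},E}$, now taken in the derived sense $/^\mathrm{der}$. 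Since a derived quotient of a representable sheaf by a smooth group is automatically an $(\infty,1)$-stack (and even Artin, as the subsequent remark notes), the result is immediate. Your approach instead checks the sheaf condition by hand: base-change of the relative curve, reduction to descent for finite projective modules over $\Pi^I_\mathrm{toric}\otimes^\blacksquare\mathcal{O}_X$ via the Clausen--Scholze machinery, and a separate argument for the filtration as a section of a flag fibration. What you gain is a proof that is independent of any global quotient presentation and that makes the descent mechanism explicit; what the paper gains is a one-line argument that simultaneously exhibits the Artin structure without appealing to Lurie--Artin representability. Your final paragraph honestly identifies the real technical cost of your route---the compatibility of the solid tensor product, the Frobenius quotient, and the limit over radii in the animated setting---which the paper's presentation-based argument bypasses entirely by never unwinding those limits.
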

 
\begin{proof}
For instance in the mixed-characteristic situation, by the results above we now form the quotient:
\begin{align}
\bigcup_{I=[s,r]}\mathrm{Res}_{\Pi^I_\mathrm{toric}/\mathbb{Q}_p}\mathrm{GL}^\mathrm{rigidification}_{\Pi^I_\mathrm{toric}}\times \left(\mathrm{Res}_{\Pi^I_\mathrm{toric}/\mathbb{Q}_p}\mathrm{GL}^\mathrm{rigidification}_{\Pi^I_\mathrm{toric},E}/\mathrm{Parab}^\mathrm{rigidification}_\mathrm{Fil}\right)\\
/^\mathrm{der}\bigcup_{I=[s,r]}\mathrm{Res}_{\Pi^I_\mathrm{toric}/\mathbb{Q}_p}\mathrm{GL}^\mathrm{rigidification}_{\Pi^I_\mathrm{toric},E}
\end{align}	
in the derived sense, which gives the result.
\end{proof}

\begin{remark}
These $(\infty,1)$-stacks actually admit presentations in some smooth sense in the derived sense. With the same proof one can have the corresponding representability of the prestacks in the equal characteristic situations, which we can omit the details.
\end{remark}

\subsection{Arithmetic Hecke Stacks, Arithmetic Moduli Stacks of Shtukas}

Now we consider the analogues of the Hecke stacks and moduli of shtukas in \cite{Laff}, \cite{Dr}, \cite{Dr1}, \cite{Dr2}, \cite{SW}, \cite{RZ}, \cite{G}, \cite{A}, \cite{La} in our arithmetic deformation situation in the following. First we can drop the filtration in the previous discussion to define arithmetic moduli of vector bundles, namely with the notations above we have the prestacks over the spaces:
\begin{align}
\widetilde{Y}:=\bigcup_{I=[s,r]}\mathrm{Spa}(\widetilde{\Pi}^I_{\mathrm{toric}},\widetilde{\Pi}_{\mathrm{toric}}^{I,+}), Y:=\bigcup_{I=[s,r]}\mathrm{Spa}({\Pi}^I_{\mathrm{toric}},{\Pi}_{\mathrm{toric}}^{I,+}). 	
\end{align}

\begin{definition}
We now define the $(\infty,1)$-prestack $\mathrm{Finiteproj}_{\widetilde{Y}}(-)$ fibered over $\mathrm{AnimaRigidAn}^\mathrm{aff}_{\mathbb{Q}_p,\mathrm{ffqc}}$ to be the functor sending $X$ to $(\infty,1)$-groupoid of vector bundles over $\widetilde{Y}_X$ being regarded as Clausen-Scholze analytic space in \cite{CS1}, \cite{CS2} and \cite{CS3}. One then has the definition for equal characteristic situation. 
\end{definition}

\begin{definition}
We now define the $(\infty,1)$-prestack $\mathrm{Finiteproj}_{{Y}}(-)$ fibered over $\mathrm{AnimaRigidAn}^\mathrm{aff}_{\mathbb{Q}_p,\mathrm{ffqc}}$ to be the functor sending $X$ to $(\infty,1)$-groupoid of vector bundles over ${Y}_X$. The imperfect space we consider here is defined to be the Frobenius quotient of
\begin{align}
\bigcup_{I=[s,r]}\mathrm{Spec}^\mathrm{CS}({\Pi}^I_{\mathrm{toric}}\otimes^\blacksquare \mathcal{O}_X,{\Pi}_{\mathrm{toric}}^{I,+}\otimes^\blacksquare \mathcal{O}_X).	
\end{align}
One then has the definition for equal characteristic situation. 
\end{definition}

\begin{definition}
We now define the prestack $\mathrm{Finiteproj}^\mathrm{disc}_{\widetilde{Y}}(-)$ fibered over $\mathrm{RigidAn}_{\mathbb{Q}_p,\mathrm{ffqc}}$ to be the functor sending $X$ to groupoid of vector bundles over $\widetilde{Y}_X$. One then has the definition for equal characteristic situation. 
\end{definition}

\begin{definition}
We now define the prestack $\mathrm{Finiteproj}^\mathrm{disc}_{Y}(-)$ fibered over $\mathrm{RigidAn}_{\mathbb{Q}_p,\mathrm{ffqc}}$ to be the functor sending $X$ to groupoid of vector bundles over $Y_X$. The imperfect space we consider here is defined to be the Frobenius quotient of
\begin{align}
\bigcup_{I=[s,r]}\mathrm{Spec}^\mathrm{CS}({\Pi}^I_{\mathrm{toric}}\otimes^\blacksquare \mathcal{O}_X,{\Pi}_{\mathrm{toric}}^{I,+}\otimes^\blacksquare \mathcal{O}_X).	
\end{align}
One then has the definition for equal characteristic situation. 
\end{definition}

\begin{proposition}
$\mathrm{Finiteproj}^\mathrm{disc}_{Y}(-)$ is an Artin stack over $\mathrm{RigidAn}_{\mathbb{Q}_p,\mathrm{ffqc}}$. The parallel result in equal characteristic sitaution holds true as well.
\end{proposition}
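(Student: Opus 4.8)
The plan is to argue as in the proof of the Artin-stack statement for $\mathrm{Finiteproj}^{\mathrm{disc},\mathrm{Fil}}_{\mathrm{FF}^\mathrm{imper}_{K,\mathrm{toric}}}(-)$ above, now with the filtration dropped and with $Y$ in place of the Frobenius quotient. First one invokes Kedlaya--Liu glueing of vector bundles over the intervals (as in \cite{KL1} and \cite{KL2}, in the relative solid setting of \cite{CS1}, \cite{CS2} and \cite{CS3}) to identify, for each $X$, the groupoid of vector bundles over $Y_X$ with the homotopy limit over the poset of closed intervals $I=[s,r]$ of the groupoids of finite projective $\Pi^I_{\mathrm{toric}}\otimes^\blacksquare\mathcal{O}_X$-modules, equipped with the glueing isomorphisms over the overlaps $\Pi^{I\cap I'}_{\mathrm{toric}}\otimes^\blacksquare\mathcal{O}_X$. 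This reduces the statement to representability over a single interval together with stability of the Artin property under such a limit.

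For a fixed interval $I$, the Banach algebra $\Pi^I_{\mathrm{toric}}$ presents $\mathrm{Spa}(\Pi^I_{\mathrm{toric}},\Pi^{I,+}_{\mathrm{toric}})$ as an affinoid rigid analytic space over $\mathbb{Q}_p$, and the prestack classifying rank-$n$ finite projective $\Pi^I_{\mathrm{toric}}\otimes^\blacksquare\mathcal{O}_X$-modules is the classifying stack of the rigidified Weil restriction $\mathrm{Res}_{\Pi^I_{\mathrm{toric}}/\mathbb{Q}_p}\mathrm{GL}^{\mathrm{rigidification}}_{\Pi^I_{\mathrm{toric}}}$; this is the content of \cite[Section 4.1]{He1} in the absolute case, and the construction relativises over $\mathrm{RigidAn}_{\mathbb{Q}_p,\mathrm{ffqc}}$ because formation of finite projective modules commutes with the solid base change $-\otimes^\blacksquare\mathcal{O}_X$. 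Each such Weil restriction is a smooth (ind-)affinoid group, so each single-interval piece is an Artin stack with smooth atlas given by the trivialised bundle and with affine, finitely presented diagonal; taking the disjoint union over the ranks $n$ yields $\mathrm{Finiteproj}^\mathrm{disc}$ over $\mathrm{Spa}(\Pi^I_{\mathrm{toric}},\Pi^{I,+}_{\mathrm{toric}})$.

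One then assembles the pieces: $\mathrm{Finiteproj}^\mathrm{disc}_{Y}(-)$ is the limit of the single-interval prestacks along the restriction maps, and trivialising a bundle on a fixed cofinal system of intervals exhibits it as the quotient of $\bigcup_{I=[s,r]}\mathrm{Res}_{\Pi^I_{\mathrm{toric}}/\mathbb{Q}_p}\mathrm{GL}^{\mathrm{rigidification}}_{\Pi^I_{\mathrm{toric}}}$ by the action of the same group read through the transition maps, precisely the filtration-free version of the quotient displayed in the proof of the proposition for $\mathrm{Finiteproj}^{\mathrm{disc},\mathrm{Fil}}_{\mathrm{FF}^\mathrm{imper}_{K,\mathrm{toric}}}(-)$. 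The ffqc descent of the resulting prestack is again the relative Kedlaya--Liu glueing (compare \cite{He1}); the diagonal remains representable and of finite presentation because the automorphisms of a bundle on $Y_X$ form a closed subfunctor of a Weil restriction of $\mathrm{GL}$ cut out by the glueing conditions; and smoothness of the atlas is inherited from that of each Weil restriction. Hence the prestack is an Artin stack, and the equal characteristic statement follows verbatim with $\mathbb{F}_p((t))$ and its toric tower replacing $\mathbb{Q}_p$.

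The main obstacle is the interaction of the infinite family $I=[s,r]$ with the non-Noetherian, infinite-type nature of the algebras $\Pi^I_{\mathrm{toric}}$: one has to verify that the solid tensor products $\Pi^I_{\mathrm{toric}}\otimes^\blacksquare\mathcal{O}_X$ stay flat and that finite projectivity is preserved under the transition maps, so that the Weil restrictions glue to an object carrying a genuinely smooth atlas rather than merely a formally smooth one. This is where the control of the tower $\mathrm{Spa}(\Pi^I_{\mathrm{toric}},\Pi^{I,+}_{\mathrm{toric}})$, the quasi-Stein structure of its union, and the descent arguments underlying \cite[Section 4.1]{He1} carry the weight of the proof.
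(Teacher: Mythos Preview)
Your proposal is correct and follows essentially the same approach as the paper: both identify $\mathrm{Finiteproj}^\mathrm{disc}_{Y}(-)$ with the quotient Artin stack $[*/\bigcup_{I=[s,r]}\mathrm{Res}_{\Pi^I_\mathrm{toric}/\mathbb{Q}_p}\mathrm{GL}^{\mathrm{rigidification}}_{\Pi^I_\mathrm{toric}}]$. The paper's proof simply asserts this presentation in one line, whereas you supply the supporting details (Kedlaya--Liu glueing over intervals, the single-interval classifying-stack identification via \cite[Section 4.1]{He1}, and the verification of the Artin axioms), but the underlying idea is the same.
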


\begin{proof}
One can form the quotient Artin stacks $[*/\bigcup_{I=[s,r]}\mathrm{Res}_{\Pi^I_\mathrm{toric}/\mathbb{Q}_p}\mathrm{GL}^\mathrm{rigidification}_{\Pi^I_\mathrm{toric}}]$ which gives the result.	
\end{proof}

\indent Then one defines the following Hecke stacks. Let $I$ be a finite set, and we consider a disjoint decomposition of $I$ into $k$-subsets:
\begin{align}
I=\bigcup_{i=1}^k I_i.	
\end{align}

\begin{definition}
We now define the $(\infty,1)$-prestack $H_{I,I_1,...,I_k}\mathrm{Finiteproj}_{\widetilde{Y}}(-)$ fibered over 
\begin{align}
\mathrm{AnimaRigidAn}^\mathrm{aff}_{\mathbb{Q}_p,\mathrm{ffqc}} 
\end{align}
to be the functor sending $X$ to $(\infty,1)$-groupoid of the objects:
\begin{align}
(M_0,M_1,...,M_k, \{p_i\}_{i\in I}, f_1,...,f_k)	
\end{align}
where each $M_i,i=1,...,k$ is a vector bundle over the base space $\widetilde{Y}_X$, with $I$ points $\{p_i\}_{i\in I}\subset \widetilde{Y}_X$, with for each $i=1,...,k$ the isomorphism:
\begin{align}
f_i: M_i|_{\widetilde{Y}_X-\cup_{j\in I_i}p_j}	\overset{\sim}{\longrightarrow} M_{i-1}|_{\widetilde{Y}_X-\cup_{j\in I_{i-1}}p_j}.
\end{align}
One then has the definition for equal characteristic situation. Following this in similar fashion directly one can define $H_{I,I_1,...,I_k}\mathrm{Finiteproj}_{{Y}}(-)$, $H_{I,I_1,...,I_k}\mathrm{Finiteproj}^\mathrm{disc}_{\widetilde{Y}}(-)$ with $H_{I,I_1,...,I_k}\mathrm{Finiteproj}^\mathrm{disc}_{Y}(-)$.
\end{definition}

\begin{definition}
We then define the moduli stacks of arithmetic shtukas. This is defined as the fiber product along the following two maps:
\begin{align}
H_{I,I_1,...,I_k}\mathrm{Finiteproj}_{\widetilde{Y}}(-)&\rightarrow \mathrm{Finiteproj}_{\widetilde{Y}}(-)\times \mathrm{Finiteproj}_{\widetilde{Y}}(-)	\\
(M_0,M_1,...,M_k, \{p_i\}_{i\in I}, f_1,...,f_k)&\mapsto(M_0,M_k)
\end{align}
and
\begin{align}
\mathrm{Finiteproj}_{\widetilde{Y}}(-)&\rightarrow \mathrm{Finiteproj}_{\widetilde{Y}}(-)\times \mathrm{Finiteproj}_{\widetilde{Y}}(-)\\
M&\mapsto (M, \mathrm{Fro}^{*}M).	
\end{align}
We use the notation $\mathrm{Sht}_{I,I_1,...,I_k}\mathrm{Finiteproj}_{\widetilde{Y}}(-)$ to denote this $(\infty,1)$-prestack. One then has the definition for equal characteristic situation. Following this in similar fashion directly one can define $\mathrm{Sht}_{I,I_1,...,I_k}\mathrm{Finiteproj}_{{Y}}(-)$, $\mathrm{Sht}_{I,I_1,...,I_k}\mathrm{Finiteproj}^\mathrm{disc}_{\widetilde{Y}}(-)$ with $\mathrm{Sht}_{I,I_1,...,I_k}\mathrm{Finiteproj}^\mathrm{disc}_{Y}(-)$.
\end{definition}

\begin{remark}
One also has the construction in direct parallel fashion the stacks in Kummer setting as in \cite{EG}.	
\end{remark}

\section{Variants}

We have many different types of variants of the constructions above. For instance we can replace the Robba rings above with the following:
\begin{itemize}
\item[A] (Extended Fargues-Fontaine Curves) $(\Pi^I_\mathrm{toric}[(\log(1+T))^{1/2}]\widehat{\otimes}_{\mathbb{Q}_p}E)\otimes^\blacksquare \mathcal{O}_X$ as in \cite{BS} where $E$ contains square roots of $\varphi(\pi)$ and $\gamma(\pi)$ as in \cite{BS}; 
\item[B] (Logarithmic Fargues-Fontaine Curves) $(\Pi^I_\mathrm{toric}\left<\log(\log(1+T))\right>)\otimes^\blacksquare \mathcal{O}_X$, the notation $\left<\right>$ means the Tate algebra over the variable $\log(\log(1+T))$, after \cite{Fon};
\item[C] (Product Fargues-Fontaine Curves) $(\Pi^I_\mathrm{toric}\widehat{\otimes}_{\mathbb{Q}_p}\Pi^{I'}_\mathrm{toric})\otimes^\blacksquare \mathcal{O}_X$ as in \cite{CKZ} and \cite{PZ}.
\end{itemize}

\begin{proposition}
The parallel statements for these variant situations hold true. 
\begin{itemize}
\item[$\square$] The prestack $\mathrm{Finiteproj}^{\mathrm{disc},\mathrm{Fil}}_{\mathrm{FF}^\mathrm{imper}_{K,\mathrm{toric}}}(-)$ fibered over $\mathrm{RigidAn}_{\mathbb{Q}_p,\mathrm{ffqc}}$ is an Artin stack. The parallel result in equal characteristic sitaution holds true as well.
\item[$\square$] The $(\infty,1)$-prestack $\mathrm{Finiteproj}^{\mathrm{Fil}}_{\mathrm{FF}^\mathrm{imper}_{K,\mathrm{toric}}}(-)$ fibered over $\mathrm{AnimaRigidAn}^\mathrm{aff}_{\mathbb{Q}_p,\mathrm{ffqc}}$ is an $(\infty,1)$-stack. The parallel result in equal characteristic sitaution holds true as well.
\end{itemize}
\end{proposition}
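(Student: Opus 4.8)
The plan is to reduce each of the three variant situations to the two statements already established for $\Pi^I_\mathrm{toric}$ above. For the variant in item A, B or C let $R^I$ denote the corresponding ring --- that is, $\Pi^I_\mathrm{toric}[(\log(1+T))^{1/2}]\widehat{\otimes}_{\mathbb{Q}_p}E$, or $\Pi^I_\mathrm{toric}\langle\log(\log(1+T))\rangle$, or $\Pi^I_\mathrm{toric}\widehat{\otimes}_{\mathbb{Q}_p}\Pi^{I'}_\mathrm{toric}$ --- always taken before the relative base change $-\otimes^\blacksquare\mathcal{O}_X$. The first step is to record that each such $R^I$ is again of the same analytic-ring type as $\Pi^I_\mathrm{toric}$, so that the Weil restriction $\mathrm{Res}_{R^I/\mathbb{Q}_p}$, the rigidified general linear group $\mathrm{GL}^\mathrm{rigidification}_{R^I}$ and the parabolic $\mathrm{Parab}^\mathrm{rigidification}_\mathrm{Fil}$ appearing in the earlier proofs continue to be defined: for A this is because the passage from $\Pi^I_\mathrm{toric}$ to $R^I$ is a finite base change along $E/\mathbb{Q}_p$ followed by a finite ring extension (adjunction of one square root), hence preserves topological finiteness; for B it is because adjoining the Tate variable is the formation of a relative closed unit disc, a smooth affine operation; for C it is because $\widehat{\otimes}_{\mathbb{Q}_p}$ of two such algebras is again of the same type, the pair of radii $(I,I')$ merely indexing a filtered colimit over a product poset.

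With these observations the proofs of the two propositions above apply \emph{verbatim} with $\Pi^I_\mathrm{toric}$ replaced by $R^I$. Concretely, in the discrete filtered case one forms the quotient
\begin{align}
\bigcup_{I=[s,r]}\mathrm{Res}_{R^I/\mathbb{Q}_p}\mathrm{GL}^\mathrm{rigidification}_{R^I}\times\left(\mathrm{Res}_{R^I/\mathbb{Q}_p}\mathrm{GL}^\mathrm{rigidification}_{R^I,E}/\mathrm{Parab}^\mathrm{rigidification}_\mathrm{Fil}\right)\\
/\bigcup_{I=[s,r]}\mathrm{Res}_{R^I/\mathbb{Q}_p}\mathrm{GL}^\mathrm{rigidification}_{R^I,E},
\end{align}
which presents $\mathrm{Finiteproj}^{\mathrm{disc},\mathrm{Fil}}$ for the variant curve as an Artin stack; in the animated case one replaces the quotient by the derived quotient $/^\mathrm{der}$ exactly as before to obtain the $(\infty,1)$-stack statement. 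The equal-characteristic analogues follow by the same substitution with $\mathbb{F}_p((t))$ in place of $\mathbb{Q}_p$, and the final ffqc descent upgrading each prestack to a stack is the one of \cite{W} and \cite{He1} already invoked.

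The main obstacle, and the only point at which the argument is not a literal transcription, is case C, the product Fargues--Fontaine curve. There the base is genuinely two-dimensional, the relevant geometry is the Clausen--Scholze analytic one of \cite{CKZ} and \cite{PZ} rather than a classical adic space, and one must check that vector bundles on the relevant $\mathrm{Spec}^\mathrm{CS}$ of $(\Pi^I_\mathrm{toric}\widehat{\otimes}_{\mathbb{Q}_p}\Pi^{I'}_\mathrm{toric})\otimes^\blacksquare\mathcal{O}_X$, after the \emph{two} commuting Frobenius quotients, are still classified by a quotient of Weil restrictions of $\mathrm{GL}$. The clean way to handle this is to fix one of the two parameters, apply the one-variable result relatively over the corresponding slice, and then let the second parameter vary, using that formation of $\mathrm{Res}$ and of the quotient stack commutes with the remaining filtered colimit and that the two Frobenius structures may be imposed one at a time. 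Once this bookkeeping is done the presentation is again smooth, giving the Artin (respectively $(\infty,1)$-Artin) property. The logarithmic variant B requires only the extra remark that the additional Tate direction contributes a smooth affine factor to the presentation, hence affects neither representability nor smoothness.
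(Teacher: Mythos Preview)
The paper gives no proof of this proposition at all: it is simply asserted, the implicit content being that the arguments for Propositions~1 and~2 go through unchanged once $\Pi^I_\mathrm{toric}$ is replaced by the relevant variant ring. Your proposal makes exactly this substitution explicit and so matches the paper's intended approach; in fact your write-up is considerably more detailed than the paper itself, which neither isolates the properties of $R^I$ needed for the Weil restriction and rigidified $\mathrm{GL}$ to exist, nor comments on the two commuting Frobenii in case~C.
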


We have the corresponding spaces in the following as well:
\begin{align}
&H_{I,I_1,...,I_k}\mathrm{Finiteproj}_{{Y}}(-), H_{I,I_1,...,I_k}\mathrm{Finiteproj}^\mathrm{disc}_{\widetilde{Y}}(-), H_{I,I_1,...,I_k}\mathrm{Finiteproj}^\mathrm{disc}_{Y}(-),\\ 
&\mathrm{Sht}_{I,I_1,...,I_k}\mathrm{Finiteproj}_{{Y}}(-), \mathrm{Sht}_{I,I_1,...,I_k}\mathrm{Finiteproj}^\mathrm{disc}_{\widetilde{Y}}(-) \mathrm{Sht}_{I,I_1,...,I_k}\mathrm{Finiteproj}^\mathrm{disc}_{Y}(-).
\end{align}

\newpage
\section*{Acknowledgements}
We thank Professor Kedlaya for conversation on the different perspectives  of Hodge-Iwasawa Theory. We mentioned the corresponding aspects discussed in this paper in our paper \cite{T2}. The author thanks Eugen Hellmann for the discussion around the construction of the rigid analytic Artin moduli stacks in the cyclotomic tower situation. The work \cite{EGH} contains the construction of such analytic stack when we consider the corresponding filtration as in \cite{He1}. We thank Professor Sorensen for the suggestions on generalizing the work of \cite{BS}.

\newpage

\end{document}